\newtheorem{theorem}{Theorem} 
\newtheorem{corollary}{Corollary}
\newtheorem{lemma}{Lemma} 
\newtheorem{definition}{Definition}
\theoremstyle{definition}
\newtheorem{remark}{Remark}  
\newcommand{\E}{\mathbb{E}}
\newcommand{\R}{\mathbb{R}}
\newcommand{\N}{\mathbb{N}}
\renewcommand{\P}{\mathbb{P}}
\newcommand{\bs}{\boldsymbol}
\numberwithin{equation}{section}
\theoremstyle{plain}
\begin{document}
\title{Differentially private estimation in a class of directed network models}
\author{
Lu Pan
\hspace{15mm}
Jianwei Hu
\hspace{15mm}
Peiyan Li
\\~~\\
Central China Normal University
\thanks{Department of Statistics, Central China Normal University, Wuhan, 430079, China.
\texttt{Emails:} panlulu@mails.ccnu.edu.cn.
jwhu@mail.ccnu.edu.cn.
lipeiyan@mail.ccnu.edu.cn.}
}
\date{}

\maketitle

\begin{abstract}
Although the theoretical properties in the $p_0$ model based on
a differentially private bi-degree sequence have been derived, it is still lack of a  unified theory for a general class of directed network models with the $p_{0}$ model as a special case.
We use the popular Laplace data releasing method to output the bi-degree sequence of directed networks,
which satisfies the private standard--differential privacy.
The method of moment is used to estimate unknown parameters.
We prove that the differentially private estimator is uniformly consistent and asymptotically normal under some conditions.
Our results are illustrated by the Probit model.
We carry out simulation studies to illustrate theoretical results and provide a real data analysis.

\vskip 5pt \noindent
\textbf{Key words}: Asymptotic properties; Differential privacy; Directed networks; Moment estimation.
 \\

\end{abstract}

\vskip 5 pt

\section{Introduction}
As more and more network data are being made public, data privacy has received wide attention because of sensitive information (e.g., sex-partner relationships).
Using anonymous or non-anonymous nodes to publish these sensitive data may cause serious privacy problems and even lead to legal actions. For example, \cite{Jernigan:Mistree:2009} successfully predicted the sexual orientation of Facebook users by
using their friendships' public information. It has been proven that this anonymized method can expose privacy through re-identification technology (e.g., \cite{Hay:2008,Narayanan:Shmatikov:2009}).

\cite{Dwork:Mcsherry:Nissim:Smith:2006} introduced the \emph{differential privacy} for protecting privacy when releasing sensitive or confidential data.
Since then, it has been widely used as a privacy standard for applications.
It guarantees that the output distribution for two neighboring data sets does not differ too much.
The differential privacy is a good framework for privacy protection, which reduces the leakage of privacy risks and ensures the utility of the data, and has been widely used as a privacy standard when releasing network data (e.g., \cite{Hay:2009,Lu:Miklau:2014,Karwa:Slakovic:2016}).

In many cases, the degree sequence is the only information available and many other important properties are constrained by it. However, the degree may carry confidential and sensitive information, such as the sexually transmitted disease [\cite{Helleringer:Kohler:2007}]. To solve it, we can add noises into degrees. For example, \cite{Hay:2009} proposed efficient algorithms for releasing and
denoising the degree distribution of undirected networks.
\cite{Karwa:Slakovic:2016} derived the differentially private estimators of parameters in the $\beta$-model,
and proved that they are consistent and asymptotically normally distributed.
\cite{Yan:2021} developed differentially private inferences in the $p_0$ model for directed networks with a bi-degree sequence.

In this paper, we aim to establish the unified asymptotically theoretical framework in a class of directed networks for differentially private analysis.
We release bi-degree sequences by adding discrete Laplacian noises and use the method of moment to estimate the unknown parameters.
This is inspired by \cite{Yan:2021}. In a general class of directed network models, we show that
the differentially private estimator is uniformly consistent and asymptotically normal under some conditions
when the number of nodes tends to infinity.
In the case of no differential privacy,
\cite{fan:2021} established the unified theoretical framework for directed graphs with bi-degree sequence.
Here, we focus on differentially private inference, which is different from [\cite{fan:2021}].

The rest of this article is organized as follows. Section \ref{section-main} introduces the necessary background of differential privacy and presents the estimation of the noisy bi-degree sequence based on the moment equation, and obtains unified asymptotic properties for the differentially private estimation as the number of nodes goes to infinity. Section \ref{s3} applies our theoretical results in Section \ref{section-main} to the Probit model. Section \ref{section:simulation} carries out the simulations under the Probit model and a real data analysis. Some further discussion is given in Section \ref{section-discussion}. All proofs are deferred to the Appendix section.

\section{Main results}
\label{section-main}

Consider a simple directed graph $G_{n}$ with $n$ nodes denoted by ``$1,\ldots,n$" and no multiple edges or self-loops.
Let the adjacency matrix of $G_n$ be $A=(a_{i,j})_{n\times n}$,
where $a_{i,j}$ is the weight of the directed edge from head node $i$ to tail node $j$.
Let $\bs d=(d_1, \ldots, d_n)^\top$ be the out-degree sequence of $G_n$, where
$d_i=\sum_{j \neq i} a_{i,j}$ is the out-degree of node $i$.
In a similar vein, denote by the in-degree sequence $\bs b=(b_1, \ldots, b_n)^\top$, where
$b_j=\sum_{j \neq i} a_{i,j}$ is the in-degree of node $j$.
We call the pair $( \bs d, \bs b)$ or $\{(d_1, b_1), \ldots, (d_n, b_n)\}$ the bi-degree sequence.

Assume that the probability mass function of the edge weights $a_{i,j}$ ($i\neq j$) have the following general form [\cite{fan:2021}]:
 \begin{equation}\label{eee}
 a_{i,j}=a|\alpha_{i},\beta_{j}\sim f((\alpha_{i}+\beta_{j})a),
 \end{equation}
where $f(\cdot)$ is a probability mass function, $\alpha_{i}$ denotes the strength parameter of node $i$ from an outgoing edge and $\beta_{j}$ denotes the strength parameter of node $j$ from an incoming edge.
We note that the value of $f(\cdot)$ in Equation (\ref{eee}) is invariant under the transforms $(\bs{\alpha},\bs{\beta})$ to $(\bs{\alpha}-c,\bs{\beta}+c)$ for a constant $c$.
For the model identification, we set $\beta_{n}=0$ hereafter.

\subsection{Differential privacy}

Let $D=(D_{1},\ldots,D_{n})$ represent the original database. Our primary concern lies in data releasing mechanisms
that accept $D$ as input and subsequently produce a database $S=(S_1, \ldots, S_k)$ for public utilization.
In general, the size of $S$ is different from $D$.
Define a data releasing mechanism $\mathcal{A}(\cdot|D)$
that specifies a conditional probability distribution for outputting $S$ given $D$.
We call $S=(S_1, \ldots, S_k)$ a sanitized database. Schematically:\\

input database $D=(D_{1},\ldots,D_{n})\xrightarrow[sanitize]{\mathcal{A}(S|D)}$ output database $S=(S_1, \ldots, S_k)$.\\

Consider two neighboring databases $D$ and $D'$, which differ only in a single element.
Roughly speaking, differential privacy requires that
$\mathcal{A}(\cdot|D)$ and $\mathcal{A}(\cdot|D')$ are almost indistinguishable.
Formally, the data releasing mechanism $\mathcal{A}$ is \emph{$\epsilon$-differentially private}
if for all subsets of output $B\subseteq Range(\mathcal{A})$ and two neighboring databases $D$ and $D'$ [\cite{Dwork:Mcsherry:Nissim:Smith:2006}],
\[
\mathcal{A}(S\in B |D) \leq e^{\epsilon }\times \mathcal{A}(S\in B |D'),
\]
where $\epsilon$ is the privacy parameter that denotes the level of privacy.
Smaller the $\epsilon$ is,  the privacy protection is stronger.

Differential privacy in network data is separated into node differential privacy [\cite{Kasiviswanathan:Nissim:Raskhodnikova:Smith;2013}] and edge differential privacy (EDP) [\cite{Nissim:Raskhodnikova:Smith:2007}],
based on the definition of the graph neighbor.
If two graphs differ in only a single edge, we call these two graphs neighbors,
in which differential privacy is EDP.
In a similar way, two graphs are neighboring if by removing one node and all
edges incident to the node in one graph, one obtains the other graph, then differential privacy is node differential privacy.
In this paper, we focus on EDP [\cite{Hay:2009}].
Let $\delta(G,G')$ be the Hamming distance, i.e., the number of different edges between graphs $G$ and $G'$.
The EDP is defined below.

\begin{definition} {\rm (EDP).}
Let $\epsilon>0$ be a parameter for privacy level. A randomized mechanism
$\mathcal{A}(\cdot |G)$ satisfies $\epsilon$-EDP if
\[
\sup_{ G, G^\prime \in \mathcal{G}, \delta(G, G^\prime)=1 } \sup_{ S\in \mathcal{S}}  \frac{ \mathcal{A}(S|G) }{ \mathcal{A}(S|G^\prime ) } \le e^\epsilon,
\]
where $\mathcal{G}$ denotes the input space of directed graphs and
$\mathcal{S}$ denotes a set of outcomes.
\end{definition}

For a function $f: \mathcal{G} \rightarrow \mathbb{R}^{k}$, let $\Delta_{f}$ be its global sensitivity:
\[
\Delta_{f} = \max_{ \delta(G, G')=1 }\| f(G) - f(G') \|_1,
\]
where $\| \cdot \|_1$ is the $L_1$-norm.
If the outputs are network statistics,
the Laplace mechanism [\cite{Dwork:Mcsherry:Nissim:Smith:2006}] can be used.
This mechanism adds Laplace noises, where it is proportional to $\Delta_{f}$.
In cases where $f(G)$ takes integer values, discrete Laplace random variables can be used as noises, as demonstrated in [\cite{Karwa:Slakovic:2016}].

\begin{lemma}\label{lemma:DLM}{\rm(Discrete Laplace Mechanism)}.
Let $f:\mathcal{G} \to \R^k$.
The discrete Laplace mechanism works by adding noise to $f$:
\[
\mathcal{A}(\cdot |G)=f(G)+(Z_1, \ldots, Z_k),
\]
where $Z_1, \ldots, Z_k$ be $i.i.d.$ random variables from discrete Laplace distribution with  probability mass function defined as follows:
\[
\P(Z=z)= \frac{1-\lambda}{1+\lambda} \lambda^{|z|},~~z \in \mathbb{Z}, ~~\lambda\in(0,1).
\]
The discrete Laplace mechanism is $\epsilon$-edge differential privacy, where $\epsilon= -\Delta_{f}\log \lambda$.

\end{lemma}

\textbf{Post-Processing:} Differentially private mechanisms are immune to post-processing.
This means that any function with a differentially private mechanism will remain differentially private.
More formally, let $f$ be an output of a $\epsilon$-differentially private algorithm and $g$ be any function. Then
$g(f(G))$ remains $\epsilon$-differentially private.

\subsection{Differential privacy in directed random graph models}

 The discrete Laplace mechanism in Lemma \ref{lemma:DLM} is used to reveal the bi-degree sequence $(\bs d, \bs b)$, guaranteeing EDP. Since adding or removing a directed edge will increase or decrease the degrees of two corresponding nodes by one each.
 When we consider $f$ to be the bi-degree sequence, we find that the global sensitivity $\Delta_{f}$ equals 2.
So, we have $\lambda=\exp(-\epsilon/2)$. Then we obtain the output $( \bs {\tilde{d}}, \bs{ \tilde{b}})$:
\begin{equation}\label{equation:df}
\begin{array}{lcl}
\tilde{d}_i=d_i + \gamma_i,~~i=1, \ldots, n,\\
\tilde{b}_j = b_j + \zeta_j,~~j=1, \ldots, n, \\
\end{array}
\end{equation}
where the random variables $\{ \gamma_i \}_{i=1}^n$ and $\{ \zeta_j \}_{j=1}^n$ are mutually independent and distributed by the symmetric discrete Laplace distribution with the parameter $\lambda$.
So $\mathbb{E}(\gamma_{i})=\mathbb{E}(\zeta_{j})=0$.
Let $\mu(\cdot)$ denote the expectation of $f(\cdot)$ and ${\bs \theta}=(\alpha_{1},\ldots,\alpha_{n},\beta_{1},\ldots,\beta_{n-1})^{\top}$. These arguments motivate the use of the following estimating equations:
 \begin{equation}\label{eqqq1}
    \begin{split}
&\tilde{d}_i =\sum_{k=1;k\neq i}^{n}\mu(\hat{\alpha}_{i}+\hat{\beta}_{k}), ~~i=1,\ldots,n,\\
&\tilde{b}_j=\sum_{k=1;k\neq j}^{n}\mu(\hat{\alpha}_{k}+\hat{\beta_{j}}), ~~j=1,\ldots,n-1,
\end{split}
    \end{equation}
where ${\widehat{\bs\theta}}=(\hat{\alpha}_{1},\ldots,\hat{\alpha}_{n},\hat{\beta}_{1},\ldots,\hat{\beta}_{n-1})^{\top}$ and $\hat{\beta}_{n}=0$. Since $ \bs {\tilde{d}}$ and $ \bs {\tilde{b}}$ satisfy edge differential privacy, ${\widehat{\bs \theta}}$ is edge differential privacy estimator of ${\bs \theta}$ according to post-processing. In the following, we will conduct a rigorous analysis of the asymptotic properties of ${\widehat{\bs \theta}}$.
\subsection{Asymptotic properties}
We first state the parameter space and some technical conditions.
Let ${\bs \theta}^{*}$ be the true parameter vector satisfying $-Q_n \le \alpha_i^* + \beta_j^* \le Q_n$ ($1\le i \neq j \le n$)
for variable $Q_n$. We consider the parameter space
\begin{equation}\label{parameter}
\Theta=\{{\bs \theta}: -Q_{n}-2r\leq\alpha_{i}+\beta_{j}\leq Q_{n}+2r,~ \text{for all}\  1 \leq i\neq j\leq n\},
\end{equation}
where $r=\|[F'({\bs \theta}^{*})]^{-1}F({\bs \theta}^{*})\|_{\infty}$. Let ${\bs g}=(d_{1},\ldots,d_{n},b_{1},\ldots,b_{n-1})^{\top}$, $g_{2n}=b_{n}$ and $\bs{\tilde{ g}}=(\tilde{d}_{1},\ldots,\tilde{d}_{n},\tilde{b}_{1},\ldots,\tilde{b}_{n-1})^{\top}$, $\tilde{g}_{2n}=\tilde{b}_{n}$.
By \cite{fan:2021}, the following condition is given.
When $\bs \theta \in \Theta$, for $i=1,\ldots, n$ and $j=1,\ldots, n-1$ with $j\neq i$, the derivatives of $\mu(\cdot)$ satisfy
\begin{equation}\label{eqq3}
 m\leq\Big|\cfrac{\partial \mu(\alpha_{i}+\beta_{j})}{\partial \alpha_{i}} \Big|= \Big|\cfrac{\partial \mu(\alpha_{i}+\beta_{j})}{\partial \beta_{j}}\Big|\leq M,
  \end{equation}
\begin{equation}\label{eqq4}
\max_{i,j}\{\Big|\cfrac{\partial^{2} \mu(\alpha_{i}+\beta_{j})}{\partial \alpha^{2}_{i}}\Big|, \Big|\cfrac{\partial^{2} \mu(\alpha_{i}+\beta_{j})}{\partial \beta_{j}\partial \alpha_{i}}\Big|,\Big|\cfrac{\partial^{2} \mu(\alpha_{i}+\beta_{j})}{\partial \beta^{2}_{j}}\Big|\}\leq \eta_{1},
\end{equation}
where $M$, $m$ and $\eta_{1}$ are functions on variable $Q_{n}$.

Now, we present the existence and consistency of $\bs{\widehat{\theta}}$ under some mild conditions.
Define the following system of functions:
\begin{equation}\label{eq:F-DP}
\begin{split}
 & F_i( \bs \theta ) =   \tilde{d}_i -  \sum_{k=1; k \neq i}^n \mu(\alpha_{i}+\beta_{k}), ~~  i=1, \ldots, n, \\
& F_{n+j}( \bs \theta ) =  \tilde{b}_j - \sum_{k=1; k\neq j}^n \mu(\alpha_{k}+\beta_{j}),  ~~  j=1, \ldots, n, \\
 &F(\bs  \theta ) =  (F_{1}(\bs\theta),\cdots,F_{n}(\bs\theta),\cdots,F_{2n-1}(\bs\theta))^{\top}.
\end{split}
\end{equation}
Obviously, the solution to the equation $F({\bs \theta})=0$ serves as precisely EDP estimator.
The consistency is proved by constructing a Newton iterative sequence: $\bs\theta^{(k+1)}=\bs\theta^{(k)}-[F'(\bs\theta^{(k)})]^{-1}F(\bs\theta^{(k)})$.
By choosing the true value $\bs{\theta^{*}}$ as the initial value, we derive the error between $\bs{\theta^{*}}$ and $\bs{\widehat{\theta}}$. We state the consistency of $\bs{\widehat{\theta}}$, whose proof is given in the Appendix.
\begin{theorem}\label{theorem1}
If inequalities (\ref{eqq3}) and (\ref{eqq4}) hold, and
\begin{eqnarray}
\label{eq2}
\frac{M^{2}}{nm^{3}}( \sqrt{n\log n } + \kappa\sqrt{\log n} ) =o(1), \\
\label{eq3}
\frac{M^{4}\eta_{1}}{nm^{6}}( \sqrt{n\log n } + \kappa\sqrt{\log n} )=o(1),
\end{eqnarray}
the EDP estimator $\bs{\widehat{\theta}}$ in (\ref{eqqq1}) satisfies
    \begin{equation}\label{eq4}
\parallel \bs{\widehat{\theta}}-\bs{\theta^{*}}\parallel_{\infty}=O_{p}\left(\frac{M^{2}}{nm^{3}}( \sqrt{n\log n } + \kappa\sqrt{\log n} )\right)=o_{p}(1).
    \end{equation}
\end{theorem}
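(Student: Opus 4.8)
The plan is to run the Newton--Kantorovich scheme suggested in the statement, taking $\bs\theta^{(0)}=\bs\theta^*$ and showing that the iterates $\bs\theta^{(k+1)}=\bs\theta^{(k)}-[F'(\bs\theta^{(k)})]^{-1}F(\bs\theta^{(k)})$ converge geometrically to a root $\widehat{\bs\theta}$ of $F$ that remains inside an $\infty$-norm ball of radius $O(r)$ about $\bs\theta^*$. The first thing I would record is that evaluating $F$ at the truth produces a centered degree residual: since $\E(e_i^+)=\E(e_j^-)=0$ and $\sum_{k\neq i}\mu(\alpha_i^*+\beta_k^*)=\E(d_i^+)=\E(z_i^+)$, we have $F_i(\bs\theta^*)=z_i^+-\E(z_i^+)$ and likewise for the in-degree block. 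Hence Lemma \ref{lemma3} gives, with probability approaching one, $\|F(\bs\theta^*)\|_\infty\le \sqrt{n\log n}+\kappa\sqrt{\log n}$.

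Next I would analyze the Jacobian $V(\bs\theta):=F'(\bs\theta)$. Differentiating \eqref{eq:F-DP}, its diagonal entries are sums $-\sum_{k\neq i}\mu'(\alpha_i+\beta_k)$, which by \eqref{eqq3} have magnitude between $nm$ and $nM$, while every off-diagonal entry is a single term $\pm\mu'$ of magnitude between $m$ and $M$. Up to sign this places $V(\bs\theta)$ in the diagonally dominant matrix class associated with bi-degree models, for which an explicit approximate inverse $S(\bs\theta)$ is available together with a bound of the form $\|V(\bs\theta)^{-1}-S(\bs\theta)\|_{\max}\le c\,M^2/(m^3 n^2)$, uniformly for $\bs\theta\in\Theta$; I would invoke that matrix-class estimate (as used in \cite{fan:2021}) rather than reprove it. Writing $V(\bs\theta^*)^{-1}=S+(V(\bs\theta^*)^{-1}-S)$, treating the structured term $S\,F(\bs\theta^*)$ directly and bounding the remainder by $\|(V(\bs\theta^*)^{-1}-S)F(\bs\theta^*)\|_\infty\le (2n-1)\|V(\bs\theta^*)^{-1}-S\|_{\max}\|F(\bs\theta^*)\|_\infty$, the dimension factor $2n-1$ against the $\max$-norm bound combines with the residual bound to give
\begin{equation*}
r=\|V(\bs\theta^*)^{-1}F(\bs\theta^*)\|_\infty=O_p\!\left(\frac{M^2}{nm^3}\left(\sqrt{n\log n}+\kappa\sqrt{\log n}\right)\right),
\end{equation*}
which is $o_p(1)$ by \eqref{eq2}. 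In particular $\bs\theta^*$ lies well inside $\Theta$, whose defining radius is exactly $2r$.

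To close the argument I would establish the contraction. The Lipschitz modulus of $\bs\theta\mapsto V(\bs\theta)$ is governed by the second derivatives of $\mu$, bounded by $\eta_1$ in \eqref{eqq4}: each entrywise difference $\mu'(\alpha_i+\beta_k)-\mu'(\alpha_i^*+\beta_k^*)$ is controlled by $2\eta_1\|\bs\theta-\bs\theta^*\|_\infty$, and the diagonal accumulates $n-1$ such terms, so on the ball $\{\bs\theta:\|\bs\theta-\bs\theta^*\|_\infty\le 2r\}$ one controls $\|V(\bs\theta)-V(\bs\theta^*)\|$ by a factor of order $n\eta_1 r$. Combining this Lipschitz bound, the inverse bound $\sup_{\Theta}\|V(\bs\theta)^{-1}\|$ from the previous step, and the value of $r$, the Newton--Kantorovich convergence criterion reduces to a condition of the type $M^4\eta_1(\sqrt{n\log n}+\kappa\sqrt{\log n})/(nm^6)=o(1)$, which is precisely \eqref{eq3}. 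Under this criterion the iterates stay in the ball and converge geometrically to a unique root $\widehat{\bs\theta}$ with $F(\widehat{\bs\theta})=0$, and telescoping the Newton increments gives $\|\widehat{\bs\theta}-\bs\theta^*\|_\infty\le 2r$, which is the claimed rate \eqref{eq4}.

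I expect the main obstacle to be the inverse-Jacobian control in the $\infty$-norm. A crude operator bound such as $\|V(\bs\theta)^{-1}\|_\infty=O(1)$ is far too weak to reproduce the $1/n$ gain visible in the stated rate; the point is to exploit the specific diagonally-dominant-plus-low-rank structure through the approximate inverse $S$, so that multiplying $V(\bs\theta^*)^{-1}$ against the $\sqrt{n\log n}$-sized residual vector yields cancellation and averaging rather than accumulation. Verifying that the matrix-class bound holds uniformly over the whole ball (not merely at $\bs\theta^*$), and that the resulting Kantorovich constant matches \eqref{eq3} with the correct powers of $M$ and $m$, is where the delicate bookkeeping lies.
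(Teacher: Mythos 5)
Your proposal follows essentially the same route as the paper's own proof: Newton's iteration started at $\bs{\theta}^{*}$, with Lemma \ref{lemma3} controlling $\|F(\bs{\theta}^{*})\|_{\infty}$, the matrix class $\mathcal{L}_n(m,M)$ and its explicit approximate inverse $S$ (due to \cite{Yan:Leng:Zhu:2016}, not \cite{fan:2021}) yielding $r=O_{p}\bigl(\tfrac{M^{2}}{nm^{3}}(\sqrt{n\log n}+\kappa\sqrt{\log n})\bigr)$, and the $\eta_{1}$-bounds on the second derivatives of $\mu$ supplying the Lipschitz constants so that the Kantorovich-type contraction condition reduces exactly to \eqref{eq3}. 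The only difference is one of packaging: the paper cites the Newton--Kantorovich convergence result of \cite{Yan:Leng:Zhu:2016} (Proposition \ref{yan1}) with constants $K_{1}=4\eta_{1}(n-1)$, $K_{2}=2\eta_{1}(n-1)$ rather than re-deriving the iteration bound, so your argument and the paper's coincide in substance.
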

Next we state the asymptotic normality of $\bs{\widehat{\theta}}$. Before introducing our main theorem, we need to provide some preliminary results on the covariance matrix of bi-degree sequences. Let $U= u_{i,j}=\text{Cov}\{\bs{g}-E(\bs{g})\}$. It follows that
$\mathrm{Var}(a_{i,j}) = u_{i,j}$ for $i=1, \ldots, n$ and $j=1, \ldots, n-1$.
Obviously, if
\begin{equation}
\label{eQ5}
m_{u}\leq \min_{i,j} \mathrm{Var}(a_{i,j}) \leq \max_{i,j} \mathrm{Var}(a_{i,j}) \leq M_{u},
\end{equation}
then $U \in \mathcal{L}_n(m_{u}, M_{u})$, $u_{2n,2n}=\mathrm{Var} (b_{n})$ and $(n-1)m_{u}\leq u_{i,i}\leq (n-1)M_{u}$. If $M_{u}/m_{u}=o(n)$, then $n\cdot m_{u}\rightarrow\infty$. So we have $u_{i,i}\rightarrow\infty$ as $n\rightarrow\infty$. Then we give the following lemma.
\begin{lemma}\label{lemma4}
Let $\lambda=\exp(-\epsilon/2)$ and $\kappa=2(-\log \lambda)^{-1}$.
(i) If the condition $\frac{\kappa (\log n)^{1/2}}{m_{u}}=o(1)$ and $M_{u}/m_{u}=o(n)$ hold, then as
$n$ tends to infinity, for any fixed $k \ge 1$, the vector
$[S\{\bs{\tilde{g}}-\E (\bs g) \}]_{1}, \cdots, [S\{\bs{\tilde{g}}-\E (\bs g) \}]_{k}$ converges in distribution to a multivariate normal distribution with mean $\mathbf{0}$
and covariance matrix $Z$, where $Z$ is given as follows:
\begin{equation}\label{equ2}
  \mathrm{diag}( \frac{u_{1,1}}{v_{1,1}^{2}}, \ldots, \frac{u_{k,k}}{v_{k,k}^{2}})+ (\frac{u_{2n,2n}}{v_{2n,2n}^{2}}) \mathbf{1}_k \mathbf{1}_k^\top,
\end{equation}
where $\mathbf{1}_k$ denotes a $k$-dimensional column vector with all elements equal to $1$.\\
(ii) Let
\[
s_n^2=\mathrm{Var}( \sum_{i=1}^n \gamma_i - \sum_{j=1}^{n-1} \zeta_j) = (2n-1)\frac{ 2\lambda}{ (1-\lambda)^2}.
\]
Assume that  $s_n/v_{2n,2n}^{1/2} $ converges to constant $c$. For any fixed $k \ge 1$, the vector $[S\{\bs{\tilde{g}}-\E (\bs g) \}]_{1}, \cdots, [S\{\bs{\tilde{g}}-\E (\bs g) \}]_{k}$ converges in distribution to a $k$-dimensional multivariate normal distribution with mean zero
and covariance matrix $B=(b_{i,j})_{k\times k}$, where $B$ is given as follows:
\begin{equation}\label{equ3}
\mathrm{diag}( \frac{u_{1,1}}{v_{1,1}^{2}}, \ldots, \frac{u_{k,k}}{v_{k,k}^{2}})+ (\frac{u_{2n,2n}}{v_{2n,2n}^{2}} + \frac{s_n^2}{v_{2n,2n}^2}) \mathbf{1}_k \mathbf{1}_k^\top,
\end{equation}
where $\mathbf{1}_k$ denotes a $k$-dimensional column vector with all elements equal to $1$.
\end{lemma}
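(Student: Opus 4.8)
The plan is to use the known explicit form of the matrix $S$ (the approximate inverse of the Jacobian $V=F'(\bs\theta^{*})$, as in \cite{fan:2021}) to reduce both claims to a single central limit theorem for a finite linear functional of independent random variables. First I would record that, for each fixed $i\le k$ (all of which lie in the out-degree block once $n$ is large), the explicit off-diagonal structure of $S$ together with the degree identity $\sum_{i}(d_i^+-\E d_i^+)-\sum_{j=1}^{n-1}(d_j^--\E d_j^-)=d_n^--\E d_n^-$ and the omission of the $2n$-th coordinate from $\bs{\tilde g}$ (forced by $\hat\beta_n=0$) give the representation
\[
[S(\bs{\tilde g}-\E\bs g)]_i=\frac{\tilde g_i-\E g_i}{v_{i,i}}+T,\qquad T:=\frac{1}{v_{2n,2n}}\Big[(d_n^--\E d_n^-)+\sum_{l=1}^{n}e_l^+-\sum_{l=1}^{n-1}e_l^-\Big].
\]
Because $\tilde g_i-\E g_i=(d_i^+-\E d_i^+)+e_i^+$ is a sum of independent centered indicators $a_{i,l}-\E a_{i,l}$ and one discrete Laplace variable, every coordinate is a linear functional of the mutually independent family $\{a_{i,l}\}\cup\{e_l^\pm\}$; the individual terms $\{(\tilde g_i-\E g_i)/v_{i,i}\}_{i=1}^{k}$ are independent of one another and share only the single common term $T$, which is exactly what will produce the diagonal-plus-rank-one covariance.

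I would then establish the joint limit by the Cram\'er--Wold device: fix $\bs c\in\R^k$, set $W=\sum_{i=1}^{k}c_i[S(\bs{\tilde g}-\E\bs g)]_i=\sum_{i=1}^{k}c_i(\tilde g_i-\E g_i)/v_{i,i}+(\sum_{i=1}^{k}c_i)T$, and show $W/\sqrt{\mathrm{Var}(W)}\to N(0,1)$ by the Lyapunov central limit theorem for independent, non-identically distributed summands. The bounded edge indicators yield uniformly small summands after division by $v_{i,i},v_{2n,2n}=\Theta(nm)$, while the discrete Laplace variables have finite third absolute moment with $\E|e|^3/\mathrm{Var}(e)^{3/2}$ bounded for fixed $\lambda$; in each block the ratio of the summed third moments to the three-halves power of the variance is $O(n^{-1/2})$, so the Lyapunov condition holds, and since the edge contribution and the noise contribution are independent their scaled sum is asymptotically normal.

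The remaining and most delicate step is the variance bookkeeping that produces exactly $Z$ and $B$. Here I would (a) show the cross-covariance between an individual term and $T$, created only through the shared edge $a_{i,n}$ and the shared noise inside $T$, is $O((M_u+\kappa^2)/(nm)^2)$ and hence negligible against the diagonal entries $u_{i,i}/v_{i,i}^2=\Theta(m_u/(nm^2))$ under $M_u/m_u=o(n)$, so the limiting covariance splits into a diagonal part plus the rank-one part $\mathrm{Var}(T)\,\mathbf 1_k\mathbf 1_k^\top$; (b) verify that the per-coordinate noise $e_i^+/v_{i,i}$ is asymptotically negligible in the diagonal, leaving $u_{i,i}/v_{i,i}^2$; and (c) evaluate $\mathrm{Var}(T)=(u_{2n,2n}+s_n^2)/v_{2n,2n}^2$ using independence of $d_n^-$ from the noise and $\mathrm{Var}(\sum_l e_l^+-\sum_l e_l^-)=s_n^2$. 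For part (i), the hypotheses $\kappa(\log n)^{1/2}/m_u=o(1)$ and $M_u/m_u=o(n)$ are precisely what force $s_n^2/v_{2n,2n}^2$ to vanish relative to $u_{2n,2n}/v_{2n,2n}^2$, collapsing the rank-one term to $u_{2n,2n}/v_{2n,2n}^2$ and giving $Z$; for part (ii), the scaling $s_n/v_{2n,2n}^{1/2}\to c$ keeps this noise term at the same order, so it survives in $B$. The main obstacle I anticipate is exactly this noise accounting: one must carry the discrete Laplace moments through the Lyapunov bound and confirm that the two regimes of the privacy parameter $\lambda$ (equivalently $\kappa$) lead precisely to the two stated covariance matrices rather than to a degenerate or divergent limit.
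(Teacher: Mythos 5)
Your proposal is correct in substance, and its structural core --- the exact identity $[S(\bs{\tilde{g}}-\E \bs g)]_i=(\tilde{g}_i-\E g_i)/v_{i,i}+T$ with $T=\{(d_n^--\E d_n^-)+\sum_{l=1}^n e_l^+-\sum_{l=1}^{n-1}e_l^-\}/v_{2n,2n}$ --- is precisely what the paper encodes in its proof of part (ii) as the factorization $[S(\bs{\tilde{g}}-\E \bs g )]_{i=1, \ldots, k} = C H$, where $H$ is the augmented $(k+2)$-vector of standardized degree sums, $d_n^-$, and $\tilde{e}$. The routes differ in two ways. For part (ii), you use Cram\'er--Wold plus the Lyapunov CLT on scalar linear combinations, whereas the paper establishes joint normality of $H$ componentwise (Lo\'{e}ve's bounded-case CLT for the edge sums, asymptotic normality of $\tilde{e}/s_n$) using independence, and then applies the linear map $C$; these are equivalent in effect. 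The more genuine difference is part (i): the paper proves no CLT there at all, but shows the entire noise contribution is uniformly negligible, $[S(\bs{\tilde{g}}-\bs g)]_i=O_p(\kappa(\log n)^{1/2}/(n^{1/2}m_{u}))=o_p(n^{-1/2})$, via the tail bound $|\tilde{e}|=O_p(\kappa(n\log n)^{1/2})$ cited from \cite{Yan:2021}, and then invokes the non-private CLT (Proposition \ref{pr2}, from \cite{fan:2021}) as a black box. Your unified variance bookkeeping ($s_n^2=O(n\kappa^2)$ and $\kappa^2/m_u\le \kappa\cdot\kappa(\log n)^{1/2}/m_u=o(1)$ once $m_u=O(1)$, so $s_n^2=o(u_{2n,2n})$) reaches the same conclusion self-containedly and makes transparent why $s_n^2/v_{2n,2n}^2$ survives in $B$ but collapses in $Z$, at the cost of re-deriving what the paper simply imports. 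One loose end you flagged but must actually close: in part (ii) the noise variables form a triangular array ($\lambda$ may vary with $n$), so boundedness of $\E|e|^3/\mathrm{Var}(e)^{3/2}$ ``for fixed $\lambda$'' does not suffice; the Lyapunov ratio for the $\tilde{e}$-block is of order $(n\lambda)^{-1/2}$ up to constants, and it is exactly the hypothesis $s_n/v_{2n,2n}^{1/2}\to c$ (which forces $s_n^2\asymp v_{2n,2n}\to\infty$, i.e.\ $n\lambda\to\infty$) that makes it vanish --- if instead $n\lambda=O(1)$, $\tilde{e}$ has a Skellam-type, non-normal limit. The paper handles this point only through the clause ``if $s_n\to\infty$'' in its proof, so your argument is no weaker, but the write-up should state this explicitly.
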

We prove Lemma \ref{lemma4} in the Appendix. Finally, we state the central limit theorem of the EDP estimator $\bs {\widehat{\theta}}$, as shown below.
\begin{theorem}\label{theorem2}
If inequalities \eqref{eqq4},\ \eqref{eq4} hold and
\begin{equation}\label{eq5}
\frac{(n+2\kappa n^{1/2}+\kappa^{2})M^{6}\eta_{1}\log n}{m^{9}n^{3/2}}=o(1),
\end{equation}
(i) if the condition $\frac{\kappa (\log n)^{1/2}}{m_{u}}=o(1)$ holds,
then as $n$ tends to infinity, for any fixed $k \ge 1$, the vector
$(\bs{\widehat{\theta}}-\bs{\theta^{*}})_{1},\cdots,(\bs{\widehat{\theta}}-\bs{\theta^{*}})_{k}$ converges in distribution to a multivariate normal distribution with mean zero
and covariance matrix $Z$, defined in \eqref{equ2}.\\
(ii)
then for any fixed $k \ge 1$, the vector $(\bs{\widehat{\theta}}-\bs{\theta^{*}})_{1},\cdots,(\bs{\widehat{\theta}}-\bs{\theta^{*}})_{k}$ converges in distribution to a $k$-dimensional multivariate normal distribution with mean zero
and covariance matrix $B$, defined in \eqref{equ3}.
\end{theorem}
\begin{remark}
It is meaningful to compare the above theorem with Theorem 2.2 in \cite{fan:2021}. The key differences in that the variance of $\widehat{\theta}_i$ has an additional factor $s_n^2/v_{2n,2n}^2$. This is due to the fact that they only consider no differential private case. The asymptotic expression of $\widehat{\theta}_i$ contains a term $\sum_{i=1}^n \gamma_i - \sum_{j=1}^{n-1} \zeta_j$. Its variance is in the magnitude of $ne^{-\epsilon/2}$. When $\epsilon$ becomes small, the variance increases quickly, such that its impact on $\widehat{\theta}_i$ cannot be ignored when it increases to a certain level. This leads to the appearance of the additional variance factor.
\end{remark}
\section{Application}\label{s3}
In this section, we provide application of the unified theoretical result to the Probit model. The Probit model assume that $a_{i,j}$ is from Bernoulli distribution with the success probability:
\begin{equation}\label{Probit model}
  P(a_{i,j}=1)=\displaystyle\int_{-\infty}^{\alpha_{i}+\beta_{j}}\frac{1}{\sqrt{2\pi}}e^{-\frac{x^{2}}{2}}dx=\Phi(\alpha_{i}+\beta_{j}),
  \end{equation}
where $\Phi(\cdot)$ is the cumulative distribution function of the standard normal distribution.
In this case, we get $$\mu(\alpha_i+\beta_j)=\displaystyle\int_{-\infty}^{\alpha_{i}+\beta_{j}}\frac{1}{\sqrt{2\pi}}e^{-\frac{x^{2}}{2}}dx.$$
By direct calculations, we have
\[
  \frac{\partial\mu(\alpha_i+\beta_j)}{\partial \alpha_{i}}=\frac{\partial  \mu(\alpha_i+\beta_j)}{\partial \beta_{j}}=\frac{1}{\sqrt{2\pi}}e^{-\frac{(\alpha_{i}+\beta_{j})^{2}}{2}}.
\]
The function $h(x)=\exp(-x^2/2)$ is symmetric on $x$ and is a decreasing function when $x\geq 0$. So we have the following inequalities,
\[
\frac{1}{\sqrt{2\pi}}e^{-\frac{(Q_{n}+2r)^{2}}{2}}\leq \Big|\frac{\partial   \mu(\alpha_i+\beta_j)}{\partial \alpha_{i}} \Big|\leq \frac{1}{\sqrt{2\pi}},
\frac{1}{\sqrt{2\pi}}e^{-\frac{Q_{n}^{2}}{2}}\leq \Big|\frac{\partial   \mu(\alpha_i+\beta_j)}{\partial \alpha^{*}_{i}} \Big|\leq \frac{1}{\sqrt{2\pi}}.
    \]
Thus $F'(\bs{\theta}^{*})\in \mathcal{L}_n(m^{*}, M^*),$ where $M^*=\frac{1}{\sqrt{2\pi}}, m^*=\frac{1}{\sqrt{2\pi}}e^{-\frac{Q_{n}^{2}}{2}}.$ Following \cite{Yan:2021} and \cite{Wang:2020}, we gain the following inequality holds,
\begin{equation}\label{eq:con4}
\max\{ \max_i|{d}_i - \E (d_i) |, \max_j | {b}_j - \E( b_j)| \} \leq \sqrt{n\log n } + \kappa\sqrt{\log n} .
\end{equation}
Again, by (\ref{eq:con4}) and Lemma 1 of \cite{Yan:Leng:Zhu:2016}, if $\kappa=o_{p}(n^{1/2})$, we obtain
\begin{equation*}
r=O\left({e^{3Q_{n}^{2}/2}}\sqrt\frac{ \log n}{n}\right).
\end{equation*}
If $e^{3Q_{n}^{2}/2}=o((n/\log n)^{1/2})$, then $r\rightarrow 0$ as $n$ tends to infinity. Thus $r$ can be small enough to ignore, for any $\bs{\theta}$ belonging to the set $\Omega( \bs{\theta}^*, 2r),$ we derive $F'(\bs{\theta})\in \mathcal{L}_n(m, M),$ where
  \begin{equation}\label{m1}
    \begin{split}
  M=\frac{1}{\sqrt{2\pi}}, m=\frac{1}{\sqrt{2\pi}}e^{-\frac{Q_{n}^{2}}{2}}.
\end{split}
\end{equation}
On the other hand, for any $i=1,\ldots, n$ and $j=1,\ldots, n-1$ with $j\neq i$,
\[
    \begin{split}
\frac{\partial^{2}  \mu(\alpha_i+\beta_j)}{\partial \alpha^{2}_{i}}=\frac{\partial^{2} \mu(\alpha_i+\beta_j)}{\partial \beta^{2}_{j}}=\frac{\partial^{2}  \mu(\alpha_i+\beta_j)}{\partial \alpha_{i}\partial\beta_{j}}=\frac{\partial^{2} \mu(\alpha_i+\beta_j)}{\partial\beta_{j}\partial\alpha_{i}}=-\cfrac{(\alpha_{i}+\beta_{j})e^{-\frac{(\alpha_{i}+\beta_{j})^{2}}{2}}}{\sqrt{2\pi}}.
\end{split}
    \]
The function $g(x)=-x\exp(-x^2/2)$ is decreasing function when $-1<x<1$; otherwise, it is the increasing function. Moreover, the function value is negative when $x>0$ and the function value is positive when $x<0$. So, the function value reaches the maximum $e^{-1/2}$ when $x=-1$. Then, we have
$$\Big|-\cfrac{\alpha_{i}+\beta_{j}}{\sqrt{2\pi}}e^{-\frac{(\alpha_{i}+\beta_{j})^{2}}{2}}\Big|
\leq \frac{1}{\sqrt{2\pi e}}.$$
Thus, $\eta_{1}=\frac{1}{\sqrt{2\pi e}}.$
If $e^{Q^{2}_{n}}=o((n/\log n)^{1/6})$ and $\kappa=o_{p}(n^{1/2})$,
\[
    \begin{split}
&\frac{M^{4}\eta_{1}}{nm^{6}}( \sqrt{n\log n } + \kappa\sqrt{\log n} )
=O\left({e^{3Q_{n}^{2}}} \sqrt\frac{ \log n}{n}\right)=o(1),
\end{split}
    \]
where $M$ and $m$ are given in (\ref{m1}), then (\ref{eq4}) is satisfied. By Theorem \ref{theorem1}, the consistency of $\bs{\widehat{\theta}}$ is as follows.
\begin{corollary}
{ If $e^{Q^{2}_{n}}=o((n/\log n)^{1/6})$ and $\kappa=o_{p}(n^{1/2})$,
 the EDP estimator $\bs{\widehat{\theta}}$ satisfies
 \begin{equation}
\parallel \bs{\widehat{\theta}}-\bs{\theta^{*}}\parallel_{\infty}=O_{p}\left({e^{3Q_{n}^{2}/2}}\sqrt\frac{ \log n}{n}\right)=o_{p}(1).
\end{equation}}
\end{corollary}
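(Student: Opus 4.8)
The plan is to obtain the corollary as a direct application of Theorem \ref{theorem1}, since that theorem already delivers existence, consistency and the explicit rate \eqref{eq4} once its two scaling hypotheses \eqref{eq2} and \eqref{eq3} are verified for the Probit model. The Probit-specific constants have already been assembled in the surrounding text: from \eqref{m1} we have $M = (2\pi)^{-1/2}$ and $m = (2\pi)^{-1/2} e^{-Q_n^2/2}$, while the second-derivative bound $\eta_1 = (2\pi e)^{-1/2}$ was obtained from the extremum of $g(x) = -x e^{-x^2/2}$. Thus the whole task reduces to substituting these three constants into \eqref{eq2} and \eqref{eq3} and tracking the resulting powers of $e^{Q_n^2}$.

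First I would check \eqref{eq2}. Substituting the constants gives $M^2/m^3 = (2\pi)^{1/2} e^{3Q_n^2/2}$, so that under $\kappa = o_p(n^{1/2})$ the term $\sqrt{n\log n}$ dominates $\kappa\sqrt{\log n}$ and
\[
\frac{M^2}{nm^3}\bigl(\sqrt{n\log n} + \kappa\sqrt{\log n}\bigr) = O\!\left(e^{3Q_n^2/2}\sqrt{\tfrac{\log n}{n}}\right),
\]
which is exactly the order of $r$ recorded earlier. This vanishes as soon as $e^{Q_n^2} = o((n/\log n)^{1/3})$.

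The hypothesis \eqref{eq3} has already been verified in the display preceding the statement: the same substitution yields $M^4\eta_1/m^6 = \sqrt{2\pi/e}\, e^{3Q_n^2}$, whence the left-hand side of \eqref{eq3} is $O(e^{3Q_n^2}\sqrt{\log n/n})$, which vanishes precisely under $e^{Q_n^2} = o((n/\log n)^{1/6})$. Because this latter condition is strictly stronger than the one needed for \eqref{eq2}, imposing $e^{Q_n^2} = o((n/\log n)^{1/6})$ together with $\kappa = o_p(n^{1/2})$ secures both hypotheses of Theorem \ref{theorem1} at once. Applying that theorem then gives existence of $\bs{\widehat{\theta}}$ with probability tending to one, and feeding $M$ and $m$ from \eqref{m1} into the rate \eqref{eq4} produces the claimed bound $\|\bs{\widehat{\theta}} - \bs{\theta^{*}}\|_\infty = O_p(e^{3Q_n^2/2}\sqrt{\log n/n}) = o_p(1)$.

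There is no genuine analytic obstacle here; the argument is a bookkeeping verification. The only point demanding care is recognizing that \eqref{eq3}, not \eqref{eq2}, is the binding constraint: the factor $m^{-6}$ in \eqref{eq3} produces $e^{3Q_n^2}$ rather than the $e^{3Q_n^2/2}$ arising in \eqref{eq2}, so the $1/6$-exponent growth condition is dictated by \eqref{eq3} and automatically subsumes what \eqref{eq2} requires. One should also confirm that the earlier estimate $r = O(e^{3Q_n^2/2}\sqrt{\log n/n}) \to 0$ persists under the corollary's hypotheses, so that the parameter space $\Theta$ in \eqref{parameter} is well defined and the constants $m$, $M$ of \eqref{m1} hold uniformly on $\Omega(\bs{\theta}^*, 2r)$.
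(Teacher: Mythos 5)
Your proposal is correct and follows essentially the same route as the paper: the paper likewise substitutes the Probit constants $M=(2\pi)^{-1/2}$, $m=(2\pi)^{-1/2}e^{-Q_n^2/2}$ from \eqref{m1} and $\eta_1=(2\pi e)^{-1/2}$ into the hypotheses of Theorem \ref{theorem1}, checks that $r=O\bigl(e^{3Q_n^2/2}\sqrt{\log n/n}\bigr)\to 0$ so the bounds hold on $\Omega(\bs{\theta}^*,2r)$, and verifies that \eqref{eq3}, of order $e^{3Q_n^2}\sqrt{\log n/n}$, is the binding condition yielding the $1/6$ exponent before invoking the theorem for existence and the rate \eqref{eq4}. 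Your identification of \eqref{eq3} rather than \eqref{eq2} as the dominant constraint is precisely the bookkeeping the paper performs.
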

Again, note that both $d_{i}=\sum_{j\neq i}a_{i,j}$ and $b_{j}=\sum_{j\neq i}a_{i,j}$ are sums of $n-1$ independent random variables, respectively. It can be shown that $U=\text{Cov}{\{{\bs{g}}-\E(\bs{g})\}}\in \mathcal{L}_n(m_{u}, M_{u})$, where
$$m_{u}=\Phi(-Q_{n})(1-\Phi(-Q_{n})), M_{u}=1/4.$$ Since $\Phi(x)(1-\Phi(x))$ is an increasing function for $x$ when $x\leq\Phi^{-1}(1/2)$ and a decreasing function when $x\geq\Phi^{-1}(1/2),$ we have
$$(n-1)\Phi(-Q_{n})(1-\Phi(-Q_{n}))\leq u_{i,i}\leq \frac{n-1}{4}, \ i=1,\cdots,2n.$$
If $e^{Q_{n}^{2}/2}=o(n^{1/18}/(\log n)^{1/9})$ and $\kappa=o_{p}(n^{1/2})$, then
\[
    \begin{split}
\frac{M^{6}\eta_{1}\log n(n+2\kappa n^{1/2}+\kappa^{2})}{m^{9}n^{3/2}}=O\left(\frac{e^{9Q_{n}^{2}/2}\log n}{n^{1/2}}\right)=o(1).
\end{split}
    \]
By Theorem \ref{theorem2} and Lemma \ref{lemma4}, the central limit theorem of $\bs{\widehat{\theta}}$ is as below.
\begin{corollary}\label{col4}
{ Assume that $e^{Q_{n}^{2}/2}=o(n^{1/18}/(\log n)^{1/9})$ and $\kappa=o_{p}(n^{1/2})$.\\
(i) If the condition $\frac{\kappa (\log n)^{1/2}}{m_{u}}=o(1)$ holds, then as $n$ tends to infinity, for any fixed $k \ge 1$, the vector $(\bs{\widehat{\theta}}-\bs{\theta^{*}})_{1},\cdots,(\bs{\widehat{\theta}}-\bs{\theta^{*}})_{k}$ converges in distribution to a multivariate normal distribution with mean of $\mathbf{0}$ and covariance matrix of $Z$.\\
(ii) Let
\[
s_n^2=\mathrm{Var}( \sum_{i=1}^n \gamma_i - \sum_{j=1}^{n-1} \zeta_j) = (2n-1)\frac{ 2\lambda}{ (1-\lambda)^2}.
\]
Assume that  $s_n/v_{2n,2n}^{1/2} $ converges to constant $c$. For any fixed $k \ge 1$, the vector $(\bs{\widehat{\theta}}-\bs{\theta^{*}})_{1},\cdots,(\bs{\widehat{\theta}}-\bs{\theta^{*}})_{k}$ converges in distribution to a
$k$-dimensional multivariate normal distribution with mean of $\mathbf{0}$ and covariance matrix of $B$.
}
\end{corollary}
\section{Simulations}
\label{section:simulation}
\par In this section, we evaluate the asymptotic results for Probit model (\ref{Probit model}) of directed graph model through numerical simulations. Similar to \cite{Yan:Leng:Zhu:2016}, we determine the parameter values using a linear form. In particular, for $i=0,\ldots,n-1$, we assign $\alpha^{*}_{i+1}=(n-1-i)L/(n-1)$; for simplicity, we let $\beta^{*}_{i}=\alpha^{*}_{i}$ for $i=1,\ldots,n-1$ with $\beta^{*}_{n}=0$. We take into account three distinct values for $L$, $L=(\log n)^{1/2}$, $\log(\log n)$ and $0$, respectively. We simulate two different values for $\epsilon$, $\epsilon=2$ and $\log n/n^{1/4}$. We carry out simulations under two different sizes of networks: $n=100$ and $n=200.$ Under each simulation setting, $10, 000$ datasets are generated.

\par By Corollary \ref{col4},
\begin{align*}
   \hat{\xi}_{i,j}& =[\hat{\alpha}_{i}-\hat{\alpha}_{j}-(\alpha_{i}^{*}-\alpha_{j}^{*})]/(\hat{z}_{i,i}+\hat{z}_{j,j})^{1/2}, \\
  \hat{\zeta}_{i,j} & =[\hat{\alpha}_{i}+\hat{\beta}_{j}-(\alpha_{i}^{*}+\beta_{j}^{*})]/(\hat{z}_{i,i}+\hat{z}_{n+j,n+j})^{1/2},\\
  \hat{\eta}_{i,j}& =[\hat{\beta}_{i}-\hat{\beta}_{j}-(\beta_{i}^{*}-\beta_{j}^{*})]/(\hat{z}_{n+i,n+i}+\hat{z}_{n+j,n+j})^{1/2}
\end{align*}
are all asymptotically distributed as standard normal random variables, where $\hat{z}_{i,i}$ is the estimate of $z_{i,i}$ by replacing ${\theta}_{i}^{*}$ with ${\hat{\theta}}_{i}.$ Three particular pairs $(1,2), (n/2,n/2+1)$ and $(n-1, n)$ are discussed for the pair $(i,j)$.

\par We apply the quantile-quantile (QQ) plot to demonstrate the normality of $\hat{\xi}_{i,j},\hat{\zeta}_{i,j},\hat{\eta}_{i,j}$. Given the similarity in the results, thus we here only show the QQ-plot for $\hat{\xi}_{i,j}$ in Figure \ref{f1} under the case of $n=100$ and $\epsilon=2$. This figure reveals that the empirical quantiles closely align with
the theoretical quantiles.

\begin{figure}\centering
\includegraphics[ height=3in, width=4in, angle=0]{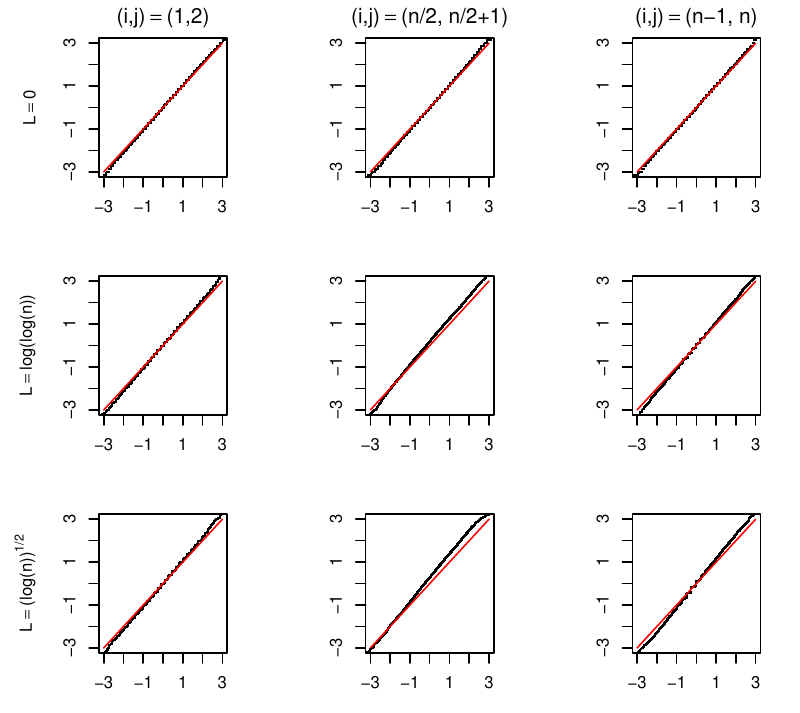}
\caption{The QQ plot of $\hat{\xi}_{i,j}$. The theoretical and empirical quantiles are represented by the horizontal and vertical axes, respectively, and the red lines serve as reference lines $y=x.$ }
\label{f1}
\end{figure}

Table \ref{Table 1} summarizes the simulation results for $\alpha_i-\alpha_j$. As expected, the length of the confidence interval decreases with $n$ and increases with $L$.

\begin{table}\centering
\caption{Estimated coverage frequency ($\times 100\%$) of $\alpha_i-\alpha_j$ as well as the length of confidence intervals (in square brackets), and the frequency ($\times 100\%$) that the estimates do not exist (in parentheses).}
\label{Table 1}
\vspace{0.5em}
\scalebox{0.8}{
		\def\arraystretch{1.0}
	\setlength{\tabcolsep}{1.2mm}{
\begin{tabular}{ccccccccccccccccccccccc}
\hline
n         &  $(i,j)$   & $L=0$   &  $L=\log(\log n)$   &  $L=(\log n)^{1/2}$ \\
\hline
 \multicolumn {5}{c}{$\epsilon$=2}\\
\hline
100         &$(1,2) $&$  93.80[0.36](0)  $&$ 93.61[0.41] (0.06) $&$ 92.63[0.46](1.13)  $\\
            &$(50,51) $&$  93.49[0.36](0)  $&$ 92.78[0.48](0.06)  $&$ 91.28[0.57](1.13) $\\
            &$(99,100)$&$  93.96[0.36](0) $&$ 92.73[0.54] (0.06)$&$ 91.14[0.64] (1.13) $ \\

&&&&&&\\
200         &$(1,2)   $&$ 94.32[0.25](0) $&$ 94.00[0.29] (0) $&$94.29[0.33] (0)$\\
            &$(100,101)$&$ 94.64[0.25](0) $&$ 93.61[0.35](0) $&$92.97[0.41] (0)$ \\
            &$(199,200) $&$ 94.66[0.25](0) $&$ 93.94[0.39](0) $&$93.02[0.46](0)$ \\
\hline
\multicolumn {5}{c}{$\epsilon=\log n/n^{1/4}$}\\
\hline
100         &$(1,2) $&$  92.37[0.36](0)  $&$ 92.40[0.41] (0.49) $&$ 90.07[0.46](4.46)  $\\
            &$(50,51) $&$  92.43[0.36](0)  $&$ 90.37[0.48](0.49)  $&$ 87.70[0.57](4.46) $\\
            &$(99,100)$&$  92.58[0.36](0) $&$ 89.92[0.54] (0.49)$&$ 88.06[0.65] (4.46) $ \\

&&&&&&\\
200         &$(1,2)   $&$ 94.19[0.25](0) $&$ 93.31[0.30] (0) $&$93.16[0.33] (0.07)$\\
            &$(100,101)$&$ 93.92[0.25](0) $&$ 92.47[0.35](0) $&$90.18[0.41] (0.07)$ \\
            &$(199,200) $&$ 93.71[0.25](0) $&$ 91.56[0.39](0) $&$91.14[0.46](0.07)$ \\
\hline
\end{tabular}}}
\end{table}

\emph {A data example.}
We evaluate the use of the proposed estimator on the Lazega's friendship network data (\cite{lazega2001collegial,snijders2006new}).
This dataset contains $71$ nodes, and each node represents an attorney.
A directed edge from one attorney to another exists if the former considers the later a friend outside of work.
In total, there are $575$ directed edges.
In-degrees and out-degrees vary from 0 to 22 and 25, respectively.

To guarantee the existence of the nonprivate MLE, we removed eight nodes before analysis, where two nodes have zero
in- and out- degrees, four nodes have zero out-
degrees and non-zero in-degrees and two nodes have zero in-
degrees and non-zero out-degrees. We chose the privacy parameter $\epsilon$ as 1. Table \ref{Table 2} reports the estimators of $\alpha_{i}$ and $\beta_{j}$ with their estimated standard errors, and their corresponding in bi-degree sequences. We can see that no extremes are observed at all nodes, both in terms of the estimates themselves and their standard errors.

\begin{table}\centering
\caption{Lazega's friendship network dataset: the edge differentially private estimator of ${\alpha}_{i}$ and ${\beta}_{j}$ with their standard errors and their corresponding bi-degree sequence.}
\label{Table 2}
\vspace{0.5em}
\scalebox{0.7}{
		\def\arraystretch{0.9}
	\setlength{\tabcolsep}{0.6mm}{
\begin{tabular}{ccccccccccccccccccccccc}
\hline
Vertex   &$\hat{\alpha}_{i}$ &$\sqrt{\hat{\sigma}_{i,i}}$  &$\hat{\beta}_{j}$ &$\sqrt{\hat{\sigma}_{j,j}} $ &$d_{i}$  &$b_{j}$ &Vertex   &$\hat{\alpha}_{i}$  &$\sqrt{\hat{\sigma}_{i,i}}$  &$\hat{\beta}_{j}$ &$\sqrt{\hat{\sigma}_{j,j}} $&$d_{i}$ &$b_{j}$ &Vertex   &$\hat{\alpha}_{i}$  &$\sqrt{\hat{\sigma}_{i,i}}$  &$\hat{\beta}_{j}$ &$\sqrt{\hat{\sigma}_{j,j}} $&$d_{i}$ &$b_{j}$\\
\hline
\textbf{1}  &$-1.61$ &$0.07$ &$0.10$ &$0.06$ &$4$  &$5$ &\textbf{22} &$-0.58$ &$0.03$ &$0.79$  &$0.03$&$23$  &$17$   &  \textbf{43} &$-1.62$ &$0.05$&$-0.39$  &$0.07$&$7$  &$4$\\
\textbf{2}  &$-1.84$ &$0.07$ &$0.34$ &$0.04$ &$4$  &$10$&\textbf{23} &$-1.21$&$0.04$  &$0.35$  &$0.04$&$11$  &$10$& \textbf{44} &$-1.85$&$0.08$ &$0.00$  &$0.06$ &$4$  &$6$\\
\textbf{3}  &$-0.97$ &$0.03$ &$0.61$&$0.03$  &$15$  &$14$&\textbf{24}  &$-1.42$ &$0.04$ &$0.97$  &$0.03$&$9$  &$22$ & \textbf{45} &$-1.43$&$0.07$ &$0.19$  &$0.07$ &$8$  &$8$ \\
 \textbf{4}&$-1.85$ &$0.08$ &$0.00$  &$0.06$&$3$  &$5$ & \textbf{25}  &$-1.08$  &$0.04$&$0.78$ &$0.03$ &$13$  &$17$ & \textbf{46} &$-1.52$&$0.05$&$0.19$  &$0.05$ &$6$  &$7$ \\
\textbf{5} &$-2.20$ &$0.11$ &$-0.60$ &$0.11$ &$2$  &$2$&\textbf{26}  &$-1.09$ &$0.04$ &$0.35$  &$0.04$&$12$  &$9$ & \textbf{47} &$-1.20$&$0.04$ &$0.61$  &$0.03$ &$11$  &$14$\\
\textbf{6} &$-2.19$ &$0.43$ &$0.18$ &$0.05$ &$1$  &$7$  &\textbf{27}  &$-1.28$ &$0.05$ &$0.35$ &$0.05$ &$10$  &$10$ & \textbf{48}&$-1.51$ &$0.05$&$0.42$   &$0.04$ &$7$  &$11$\\
\textbf{7} &$-1.71$ &$0.06$ &$0.54$ &$0.03$ &$6$  &$14$  &\textbf{28}  &$-1.62$&$0.05$  &$-0.11$  &$0.06$&$6$  &$5$&\textbf{49}&$-1.43$ &$0.04$&$0.35$  &$0.04$  &$8$  &$10$\\
\textbf{8}  &$-1.04)$ &$0.03$ &$-0.22$  &$0.07$&$14$  &$4$& \textbf{29}  &$-0.45$ &$0.03$ &$0.74$  &$0.03$&$25$  &$15$   &\textbf{50} &$-1.35$&$0.04$ &$0.48$   &$0.04$ &$9$  &$12$  \\
\textbf{9} &$-1.71$&$0.05$  &$0.60$  &$0.03$&$5$  &$14$ &\textbf{30}  &$-1.72$ &$0.07$ &$0.19$&$0.05$  &$4$  &$7$ & \textbf{51} &$-1.03$ &$0.03$&$0.55$    &$0.04$&$13$  &$12$ \\
\textbf{10} &$-0.73$ &$0.03$ &$0.02$ &$0.04$ &$22$  &$8$ &\textbf{31}  &$-1.22$  &$0.04$&$-0.59$ &$0.08$&$12$  &$3$  &    \textbf{52} &$-1.85$ &$0.06$&$-0.39$  &$0.07$ &$5$  &$4$   \\
\textbf{11} &$-1.08$  &$0.03$&$0.88$ &$0.03$ &$14$  &$20$  &\textbf{32}  &$-1.84$ &$0.05$ &$0.27$  &$0.04$&$6$  &$11$ &\textbf{53} &$-1.84$&$0.07$ &$0.19$  &$0.05$  &$4$  &$8$\\
\textbf{12}  &$-1.43$ &$0.04$ &$0.01$ &$0.05$ &$8$  &$6$  &\textbf{33} &$-1.35$  &$0.04$&$0.35$ &$0.04$  &$9$  &$10$   & \textbf{54} &$-1.85$ &$0.08$&$-0.24$ &$0.08$ &$3$  &$3$\\
\textbf{13} &$-1.73$ &$0.08$ &$-0.39$ &$0.17$&$4$  &$2$ &\textbf{34}  &$-1.28$ &$0.04$ &$0.48$ &$0.04$ &$9$   &$11$  &\textbf{55} &$-2.00$ &$0.07$&$-0.24$ &$0.06$&$4$  &$5$ \\
\textbf{14} &$-1.43$  &$0.04$&$0.35$  &$0.04$&$8$  &$10$ &\textbf{35} &$-1.61$&$0.05$  &$0.41$ &$0.04$  &$8$  &$13$&\textbf{56} &$-0.71$ &$0.03$&$0.73$ &$0.03$  &$19$  &$15$\\
\textbf{15}  &$-0.56$&$0.03$  &$0.84$ &$0.03$ &$23$  &$18$ &\textbf{36}&$-1.43$ &$0.04$&$0.54$ &$0.03$    &$8$  &$13$& \textbf{57} &$-0.54$ &$0.03$&$0.37$   &$0.05$&$22$  &$8$\\
\textbf{16}  &$-1.44$ &$0.04$ &$-0.23$ &$0.05$ &$9$  &$5$ &\textbf{37} &$-1.28$ &$0.04$&$0.19$  &$0.05$   &$10$  &$8$ &\textbf{58} &$-0.98$ &$0.03$&$-0.58$  &$0.17$ &$16$  &$3$\\
\textbf{17}  &$-2.00$ &$0.07$ &$-0.39$ &$0.07$ &$4$  &$4$ &\textbf{38} &$-1.20$ &$0.04$&$0.72$   &$0.03$  &$12$  &$17$&\textbf{59} &$-2.20$ &$0.07$&$-0.92$ &$0.08$&$4$  &$3$\\
\textbf{18} &$-1.09$ &$0.03$ &$0.20$ &$0.05$ &$12$  &$7$ &\textbf{39} &$-0.92$ &$0.03$&$0.36$  &$0.04$   &$15$  &$9$ &\textbf{60} &$-1.52$ &$0.05$&$0.01$  &$0.06$ &$6$  &$5$\\
\textbf{19} &$-1.27$ &$0.05$ &$0.77$ &$0.03$ &$8$  &$15$ &\textbf{40} &$-0.86$&$0.03$ &$0.67$ &$0.03$    &$15$  &$13$ &\textbf{61} &$-1.72$&$0.07$ &$-0.23$ &$0.08$ &$5$  &$4$ \\
\textbf{20}  &$-1.44$ &$0.05$ &$0.01$ &$0.06$ &$8$  &$6$ &\textbf{41}&$-1.62$ &$0.05$&$-0.23$ &$0.07$  &$6$  &$4$ &\textbf{62}&$-1.52$ &$0.04$&$-0.10$ &$0.05$ &$7$  &$5$ \\
\textbf{21}  &$-2.51$ &$0.11$ &$0.10$ &$0.05$ &$1$  &$7$ &\textbf{42}&$-1.85$&$0.08$ &$0.00$  &$0.06$&$3$  &$5$&\textbf{63} &$-2.51$ &$0.11$&$0.00$   &$0.05$ &$1$  &$6$  \\
\hline
\end{tabular}}}
\end{table}

\section{Discussion}
\label{section-discussion}
We have established the asymptotic theory in a class of directed random graph model parameterized by the differentially private bi-sequence  and illustrated application to the Probit model. The result shows that statistical inference can be made using the noisy bi-sequence. We assume that the edges are mutually independent in this work. We should be able to obtain consistent conclusion if the edges are dependent, provided that the conditions stated in Theorem \ref{theorem1} are met. However, the asymptotic normality of the estimator is not clear. To avoid this problem, we need appropriately select a probability distribution for directed random graphs when using the existing method. In the further, we may relax our theoretical conditions to ignore the independence of edges.
\section*{Appendix}\label{Appendix}
\renewcommand\thesection{\Alph{section}}
\setcounter{section}{+1}
\renewcommand\theequation{A.\arabic{equation}}
We start with some notations. Let $\R_+ = (0, \infty)$, $\R_0=[0, \infty)$,
$\N=\{1,2,\ldots \}$, $\N_0 = \{0,1,2,\ldots\}$.
Given a vector $\mathbf{x}=(x_1, \ldots, x_n)^\top\in \R^n$, let
$\|\mathbf{x}\|_\infty = \max_{1\le i\le n} |x_i|$ represent the $\ell_\infty$-norm of $\mathbf{x}$.
For matrix $J=(J_{i,j})_{n\times n}$,
$\|J\|_\infty$ denotes the matrix norm induced by the $\|\cdot \|_\infty$-norm on vectors in $\R^n$:
\[
\|J\|_\infty = \max_{\mathbf{x}\neq 0} \frac{ \|J\mathbf{x}\|_\infty }{\|\mathbf{x}\|_\infty}=\max_{1\le i\le n}\sum_{j=1}^n |J_{i,j}|.
\]
For a matrix $A=\left(a_{i, j}\right)$, $\|A\| :=\max _{i, j}\left|a_{i, j}\right|$ denotes the matrix maximum norm.
\par Given $m,M>0,$ we say a $(2n-1)\times (2n-1)$ matrix $V=(v_{i,j})$ belongs to the matrix class $\mathcal{L}_{n}(m,M)$ if $V$ satisfies the following conditions:
\begin{equation*}
\begin{array}{l}%
m\leq v_{i,i}-\sum_{j=n+1}^{2n-1}v_{i,j}\leq M,i=1,\dots,n-1;~~v_{n,n}=\sum_{j=n+1}^{2n-1}v_{n,j},\\
v_{i,j}=0,i,j=1,\dots,n,i\neq j,\\
v_{i,j}=0,i,j=n+1,\dots,2n-1,i\neq j,\\
m\leq v_{i,j}=v_{j,i}\leq M,i=1,\dots,n,j=n+1,\dots,2n-1,j\neq n+i,\\
v_{i,n+i}=v_{n+i,i}=0,i=1,\dots,n-1,\\
v_{i,i}=\sum_{k=1}^{n}v_{k,i}=\sum_{k=1}^{n}v_{i,k},i=n+1,\dots,2n-1.
\end{array}
\end{equation*}
Obviously, if $V \in \mathcal{L}_n(m, M),$ then $V$ is a $(2n-1)\times (2n-1)$ diagonally dominant, symmetric nonnegative matrix and $V$ has the following structure:
\begin{equation*}
V=\left(
\begin{array}{ccccccccccc}
V_{11}\ \  V_{12} \\
V^{\top}_{12} \ \ V_{22}
\end{array}
\right),
\end{equation*}
where $n\times n$ matrix $V_{11}$ and $(n-1)\times (n-1)$ matrix $V_{22}$ are diagonal matrices, $V_{12}$ is a nonnegative matrix whose nondiagonal elements are positive and diagonal elements equal to zero.\par
Define $v_{2n,i}=v_{i,2n}:=v_{i,i}-\sum_{j=1;j\neq i}^{2n-1}v_{i,j},$ for $i=1,\ldots,2n-1$ and $v_{2n,2n}=\sum_{i=1}^{2n-1}v_{2n,i}.$ Then $m\leq v_{2n,i}\leq M$ for $i=1,\ldots,n-1, v_{2n,i}=0$ for $i=n,n+1,\ldots,2n-1$ and $v_{2n,2n}=\sum_{i=1}^{n}v_{i,2n}=\sum_{i=1}^{n}v_{2n,i}.$
Generally, the inverse of $V$, $V^{-1}$, does not possess a closed-form expression. \cite{Yan:Leng:Zhu:2016} proposed a matrix $S=(s_{i,j})$ to approximate $V^{-1}$,
 \begin{equation*}
S=\left(
\begin{array}{ccccccccccc}
S_{11}\ \  S_{12} \\
S^{\top}_{12} \ \ S_{22}
\end{array}
\right),
\end{equation*}
where $n\times n$ matrix $S_{11}=1/v_{2n,2n}+\text {diag}(1/v_{1,1},1/v_{2,2},\ldots,1/v_{n,n}),$ $S_{12}$ is an $n\times (n-1)$ matrix whose elements are all equal to $-1/v_{2n,2n}$ and  $(n-1)\times (n-1)$ matrix $S_{22}=1/v_{2n,2n}+\text {diag}(1/v_{n+1,n+1},1/v_{n+2,n+2},\ldots, 1/v_{2n-1,2n-1}).$

\subsection{ Proof of Theorem \ref{theorem1}}
To show the Theorem \ref{theorem1}, we need one lemma below.
\begin{lemma}\label{lemma3}(Lemma 2 in \cite{Yan:2021})
With probability approaching one,
\begin{equation}\label{equ1}
  \max\{\max_{i=1,\ldots,n}|d_i -E(d_i )|, \max_{j=1,\ldots,n}|b_j -E(b_j )|\}\leq \sqrt{n\log n } + \kappa\sqrt{\log n} ,
\end{equation}
where $\kappa=2(-\log \lambda)^{-1}=4/\epsilon$.
  \end{lemma}
\par \noindent{{\bf Proof of Theorem \ref{theorem1}.}} In the Newton's iterative step, we set the initial value $\bs{\theta}^{(0)}:=\bs{\theta}^{*}\in \Theta.$ Let
$F(\bs{\theta})=(F_{1}(\bs\theta),\ldots,F_{n}(\bs\theta),F_{n+1}(\bs\theta),\ldots,F_{2n-1}(\bs\theta))^{\top}$. By (\ref{eq:F-DP}), for $i=1,\ldots,n$
\[
    \begin{split}
\cfrac{\partial F_{i}}{\partial \alpha_{l}}&=0, \ \ \ \  \  l=1,\ldots, n, l\neq i, \ \ \ \ \cfrac{\partial F_{i}}{\partial\alpha_{i}}=-\sum_{k=1,k\neq i}^{n}\cfrac{\partial\mu(\alpha_{i}+\beta_{k})}{\partial\alpha_{i}},\\
\cfrac{\partial F_{i}}{\partial\beta_{j}}&=-\cfrac{\partial\mu(\alpha_{i}+\beta_{j})}{\partial\beta_{j}},\ \  \ \ j=1,\ldots, n-1, j\neq i,\ \ \ \ \cfrac{\partial F_{i}}{\partial\beta_{i}}=0,\\
\text{and  for}\  j=1, \ldots, n-1,\\
\cfrac{\partial F_{n+j}}{\partial \alpha_{l}}&=-\cfrac{\partial\mu(\alpha_{l}+\beta_{j})}{\partial\alpha_{l}}, \ \   \ \ l=1,\ldots, n, l\neq j,
\ \ \ \ \cfrac{\partial F_{n+j}}{\partial\alpha_{j}}=0.\\
\cfrac{\partial F_{n+j}}{\partial \beta_{l}}&=0, \ \ \ \  l=1,\ldots, n-1, l\neq j\ \ \ \ \cfrac{\partial F_{n+j}}{\partial\beta_{j}}=-\sum_{k=1,k\neq j}^{n}\cfrac{\partial\mu(\alpha_{k}+\beta_{j})}{\partial\beta_{j}}.
\end{split}
    \]
By (\ref{eqq3}), we get $-F'(\bs{\theta})\in \mathcal{L}_{n}(m,M)$.
To apply Theorem 7 of \cite{Yan:Leng:Zhu:2016}, it is necessary to compute the values of $r$ and $\rho r$ within the context of this theorem.
By Lemma 1 of \cite{Yan:Leng:Zhu:2016}, we have
\[
    \begin{split}
r&=\|[F'(\bs{\theta^{*}})]^{-1}F(\bs{\theta^{*}})\|_{\infty}\\
&\leq \cfrac{2c_{1}(2n-1)M^{2}}{m^{3}(n-1)^{2}}\|F(\bs{\theta^{*}})\|_{\infty}+\frac{|F_{2n}(\bs{\theta^{*}})|}{v_{2n,2n}}
+\max_{i=1,\ldots,2n-1}\frac{|F_{i}(\bs{\theta^{*}})|}{v_{i,i}}\\
&\leq \left(\cfrac{2c_{1}(2n-1)M^{2}}{m^{3}(n-1)^{2}}+\frac{1}{(n-1)m}+\frac{1}{m(n-1)}\right)(\sqrt{n\log n}+\kappa\sqrt{\log n})\\
&=\cfrac{M^{2}}{m^{3}(n-1)}\left(\frac{2c_{1}(2n-1)}{n-1}+\frac{2m^{2}}{M^{2}}\right)(\sqrt{n\log n}+\kappa\sqrt{\log n})\\
&\leq \frac{c_{2}M^{2}}{nm^{3}}(\sqrt{n\log n}+\kappa\sqrt{\log n}),
\end{split}
    \]
where $c_{2}$ represents a constant that is independent of $M$, $m$ and $n$. Let
$$F'_{i}(\bs{\theta})=(F'_{i,1}(\bs \theta),\cdots,F'_{i,n}(\bs \theta),\cdots,F'_{i,n+1}(\bs \theta),\cdots,F'_{i,2n-1}(\bs \theta))^{\top}:=\left(\cfrac{\partial F_{i}}{\partial\alpha_{1}},\ldots,\cfrac{\partial F_{i}}{\partial\alpha_{n}},\cfrac{\partial F_{i}}{\partial\beta_{1}},\ldots, \cfrac{\partial F_{i}}{\partial\beta_{n-1}}\right)^{\top}.$$
Then, for $i=1,\ldots,n,$ we have
\[
    \begin{split}
&\cfrac{\partial^{2} F_{i}}{\partial \alpha_{s}\partial \alpha_{l}}=0, \   s\neq l, \ \cfrac{\partial^{2} F_{i}}{\partial\alpha^{2}_{i}}=\sum_{k\neq i}\cfrac{\partial^{2}\mu(\alpha_{i}+\beta_{k})}{\partial\alpha^{2}_{i}},\\
&\cfrac{\partial^{2} F_{i}}{\partial \beta_{s}\partial \alpha_{i}}=\cfrac{\partial^{2}\mu(\alpha_{i}+\beta_{s})}{\partial\beta_{s}\partial\alpha_{i}}, \   s=1, \ldots, n-1, s\neq i, \cfrac{\partial^{2} F_{i}}{\partial\beta_{i}\partial\alpha_{i}}=0,\\
&\cfrac{\partial^{2} F_{i}}{\partial \beta^{2}_{j}}=\cfrac{\partial^{2}\mu(\alpha_{i}+\beta_{j})}{\partial\beta^{2}_{j}} ,\   j=1, \ldots, n-1,  \cfrac{\partial^{2} F_{i}}{\partial\beta_{s}\partial\beta_{l}}=0, s\neq l.
\end{split}
    \]
By the mean value theorem for vector-valued functions [\cite{Lang:1993} (page 341)], we have
$$F'_{i}(\textbf{x})-F'_{i}(\textbf{y})=J^{(i)}(\textbf{x}-\textbf{y}),\ {\textbf{x}, \textbf{y}}\in \Theta,$$
where $J^{(i)}_{s,l}=\displaystyle\int_{0}^{1}\cfrac{\partial F_{i,s}'}{\partial \theta_{l}}(t\textbf{x}+(1-t)\textbf{y})\text{dt}, \ s,l=1,\ldots,2n-1$. By (\ref{eqq4}), we get $$\max_{s}\sum_{l}|J^{(i)}_{s,l}|\leq 2\eta_{1}(n-1), \ \
\sum_{s,l}|J^{(i)}_{s,l}|\leq 4\eta_{1}(n-1).$$
Similarly, for $i=n+1,\ldots,2n-1,$ we also have $F'_{i}(\text{x})-F'_{i}(\text{y})=J^{(i)}(\textbf{x}-\textbf{y})$, and
the above inequalities. Correspondingly, for any $i$, we get
\[
    \begin{split}
 \|F'_{i}(\textbf{x})-F'_{i}(\textbf{y})\|_{\infty}&\leq \|J^{(i)}\|_{\infty}\|\textbf{x}-\textbf{y}\|_{\infty}\leq
2\eta_{1}(n-1)\|\textbf{x}-\textbf{y}\|_{\infty},
\end{split}
    \]
and for any vector $\textbf{v} \in \mathbb{R}^{2n-1},$
\[
    \begin{split}
\|[F'(\textbf{x})-F'(\textbf{y})]\textbf{v}\|_{\infty}&\leq \|\textbf{x}-\textbf{y}\|_{\infty} \|\textbf{v}\|_{\infty}\sum_{s,l}|J^{(i)}_{s,l}|\leq 4\eta_{1}(n-1)\|\textbf{x}-\textbf{y}\|_{\infty} \|\textbf{v}\|_{\infty}.
\end{split}
    \]
Thus, we can choose $K_{1}=4\eta_{1}(n-1)$ and $K_{2}=2\eta_{1}(n-1)$. Again, we have
\[
    \begin{split}
\rho&=\frac{4c_{1}(2n-1)M^{2}}{2m^{3}n^{2}}\eta_{1}(n-1)+\frac{1}{(n-1)m}2\eta_{1}(n-1)\\
&=\frac{M^{2}}{m^{3}}\left(\frac{4c_{1}(2n-1)(n-1)}{2n^{2}}+\frac{2m^{2}}{M^{2}}\right)\eta_{1}\\
&\leq\frac{c_{3}M^{2}\eta_{1}}{m^{3}},
\end{split}
    \]
where $c_{3}$ represents a constant that is independent of $M,m$ and $\eta_{1}$. Combining with (\ref{eq3}), we get
 \[
    \begin{split}
\rho r&\leq\frac{c_{3}M^{2}\eta_{1}}{m^{3}}\left(\frac{c_{2}M^{2}}{nm^{3}}(\sqrt{n\log n}+\kappa\sqrt{\log n})\right)\\
&\leq O\left(\frac{M^{4}\eta_{1}}{nm^{6}}(\sqrt{n\log n}+\kappa\sqrt{\log n})\right)=o(1).
\end{split}
    \]
 Thus, all conditions in Theorem 7 of \cite{Yan:Leng:Zhu:2016} are satisfied. By Lemma \ref{lemma3}, inequality (\ref{equ1}) holds with probability converging to one, thus ensuring the establishment of
(\ref{eq4}).

\subsection{ Proof of Theorem  \ref{theorem2}}

\par \noindent{{\bf Proof of Lemma \ref{lemma4}.}} There are two cases to consider. \\
(i)
By \cite{Yan:2021}, we have
\begin{equation}\label{eq:pro3:b}
| \sum_{i=1}^n \gamma_i | = O_p( \kappa(n\log n)^{1/2} ),~~~~| \sum_{j=1}^n \zeta_j | = O_p(\kappa(n\log n)^{1/2} ).
\end{equation}
Since $\tilde{g}_i -  g_i = \gamma_i$ for $i=1, \ldots, n$, and $\tilde{ g}_{n+j} - g_{n+j} = \zeta_j$ for $j=1, \ldots, n-1$, we have
\begin{eqnarray*}
[S( \bs{\tilde{g}} - \E \bs g)]_i & = & [S( \bs{\tilde{g}} -\bs g )]_i+[S(\bs g - \E\bs  g )]_i \\
& = & (-1)^{1(i,j>n)}\frac{ \sum_{i=1}^n \gamma_i - \sum_{j=1}^{n-1} \zeta_j }{ u_{2n,2n}}+[S(\bs g - \E \bs g )]_i  \\
& = & [S(\bs g - \E\bs g )]_i + O_p( \frac{\kappa(\log n)^{1/2}}{n^{1/2}m_{u}} ).
\end{eqnarray*}
So if $\frac{\kappa(\log n)^{1/2}}{m_{u}}=o(1)$, then we have
\begin{eqnarray*}
[S( \bs{\tilde{g}} - \E \bs g)]_i&=&[S(\bs g - \E \bs g )]_i + o_p( n^{-1/2} ).
\end{eqnarray*}
Consequently, the first part of Lemma \ref{lemma4} immediately follows Proposition 1 of \cite{fan:2021}.\\
(ii) $s_n/v_{2n,2n}^{1/2}$ converges to constant $c$.
Let $\tilde{e}=\sum_{i=1}^n \gamma_i - \sum_{j=1}^{n-1} \zeta_j$ and $\tilde{a}_{i,j} = a_{i,j} - \E a_{i,j}$. Denote
\begin{eqnarray*}
H:=\left(
\begin{array}{c}
\frac{ \tilde{g}_1 - \E g_1 }{ u_{1,1}^{1/2} } \\
\vdots
\\
\frac{ \tilde{g}_k - \E g_k }{ u_{k,k}^{1/2} } \\
\frac{\tilde{g}_{2n}- \E g_{2n} }{ u_{2n,2n}^{1/2} } \\
\frac{ \tilde{e}  }{s_n}
\end{array}
\right)
&=& \left(
\begin{array}{c}
\frac{ \sum_{j=1}^k \tilde{a}_{1,j}  }{ u_{1,1}^{1/2} } \\
\vdots
\\
\frac{ \sum_{j=1}^k \tilde{a}_{k,j}  }{ u_{k,k}^{1/2} } \\
\frac{ \sum_{i=1}^k \tilde{a}_{i,n} }{ u_{2n,2n}^{1/2} } \\
0
\end{array}
\right)
+ \left(
\begin{array}{c}
\frac{ \sum_{j=k+1}^n \tilde{a}_{1,j}  }{ u_{1,1}^{1/2} } \\
\vdots
\\
\frac{ \sum_{j=k+1}^n \tilde{a}_{k,j}  }{ u_{k,k}^{1/2} } \\
\frac{ \sum_{i=k+1}^n \tilde{a}_{i,n} }{ u_{2n,2n}^{1/2} } \\
\frac{ \tilde{e}  }{s_n}
\end{array}
\right) \\
& := & I_1 + I_2.
\end{eqnarray*}
Since $|a_{i,j}| \le 1$ and $u_{i,i}\to\infty$ as $n\to\infty$, $|\sum_{j=1}^k \tilde{a}_{i,j}|/u_{i,i}=o(1)$ for $i=1, \ldots, k$ with  fixed $k$.
So $I_1=o(1)$.

Next, we will consider $I_2$. Recall that $s_n^2=\mathrm{Var}(\tilde{e})$. According to the large sample theory, $(\tilde{e} - \E \tilde{e})/s_n$ is asymptotically standard normal distribution
if $s_n\to\infty$. Furthermore, based on the central limit theorem for the bounded case presented in \cite{Loeve:1977} (page 289), $ \sum_{j=k+1}^n \tilde{a}_{i,j} / u_{i,i}^{1/2} $
is asymptotically standard normal distribution for any fixed $i$ if $M/m_{u}=o(n)$.
Since $\tilde{a}_{i,j}$'s ($1\le i \le k$, $j=k+1, \ldots, n$), $\tilde{a}_{i,n}$'s and
$\tilde{e}$ are mutually independent, $I_2$ is asymptotically
a $k+2$-dimensional standardized normal distribution with covariance matrix $I_{k+2}$, where $I_{k+2}$ denotes the $(k+2)\times (k+2)$ dimensional identity  matrix.
Let
\[
C = \left ( \begin{array}{cccccc}
\frac{\sqrt{u_{1,1}}}{ {v_{1,1}}}, & 0,  & \ldots, & 0, & \frac{\sqrt{u_{2n,2n}}}{ { v_{2n,2n} }}, & \frac{s_n}{v_{2n,2n}} \\
0, &\frac{\sqrt{u_{2,2}}}{ {v_{2,2}}},& \ldots, & 0, &\frac{\sqrt{u_{2n,2n}}}{ { v_{2n,2n} }}, & \frac{s_n}{v_{2n,2n}} \\
  && \ldots &&& \\
0, &0, & \ldots, & \frac{\sqrt{u_{k,k}}}{ {v_{k,k}}}, & \frac{\sqrt{u_{2n,2n}}}{ { v_{2n,2n} }}, & \frac{s_n}{v_{2n,2n}} \\
\end{array}
\right ).
\]
Then
\[
[S(\bs{\tilde{g}}-\E \bs g )]_{i=1, \ldots, k} = C H.
\]
Since $s_n^2/v_{2n,2n} \to c^2$ for some constant $c$, all positive entries of $C$ are in the same order $n^{1/2}$.
So $CH$ is asymptotically $k$-dimensional multivariate normal distribution with mean $(\overbrace{0, \ldots, 0}^{k})$ and covariance matrix
\[
\mathrm{diag}( \frac{u_{1,1}}{v_{1,1}^{2}}, \ldots, \frac{u_{k,k}}{v_{k,k}^{2}})+ (\frac{u_{2n,2n}}{v_{2n,2n}^{2}} + \frac{s_n^2}{v_{2n,2n}^2}) \mathbf{1}_k \mathbf{1}_k^\top,
\]
where $\mathbf{1}_k$ denotes a $k$-dimensional column vector with all elements equal to $1$.
Before proving Theorem \ref{theorem2}, we first establish a lemma.

\begin{lemma}\label{Lemma9}
If \eqref{eq5} holds, then for any $i$,
\begin{equation}\label{5.2}
\bs{\widehat{\theta}}_{i}- \bs{\theta}^{*}_{i}=[V^{-1}(\bs{\tilde{g}}-\mathbb{E}(\bs g))]_{i}+o_p(n^{-1/2}).
\end{equation}
\end{lemma}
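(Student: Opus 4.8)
The plan is to linearize the estimating equation about the truth and control the quadratic remainder through Lemma \ref{yan2}. Since $\widehat{\bs\theta}$ solves $F(\bs\theta)=0$ and, by \eqref{eq:F-DP}, $F(\bs\theta^{*})=\bs{\tilde g}-\E\bs g$ (because $\E z_i^{+}=\sum_{k\neq i}\mu(\alpha_i^{*}+\beta_k^{*})$ and likewise for the in-degrees), I would expand each coordinate $F_s$ to second order about $\bs\theta^{*}$ in integral-remainder form, obtaining
\[
0=F(\widehat{\bs\theta})=F(\bs\theta^{*})+F'(\bs\theta^{*})(\widehat{\bs\theta}-\bs\theta^{*})+h,
\]
where $h=(h_1,\ldots,h_{2n-1})^{\top}$ with $h_s=\int_0^1(1-t)(\widehat{\bs\theta}-\bs\theta^{*})^{\top}\nabla^2F_s(\bs\theta^{*}+t(\widehat{\bs\theta}-\bs\theta^{*}))(\widehat{\bs\theta}-\bs\theta^{*})\,dt$. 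As established in the proof of Theorem \ref{theorem1}, $-F'(\bs\theta^{*})=V\in\mathcal{L}_n(m,M)$ with $M/m=o(n)$, so $V$ is invertible and
\[
\widehat{\bs\theta}-\bs\theta^{*}=V^{-1}(\bs{\tilde g}-\E\bs g)+V^{-1}h .
\]
It then remains to show $[V^{-1}h]_i=o_p(n^{-1/2})$ for every fixed $i$.

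Next I would bound $h$ and feed it into Lemma \ref{yan2}. Using the Hessian bound \eqref{eqq4} together with the count of nonzero second derivatives of each $F_s$ obtained in the proof of Theorem \ref{theorem1} (namely $\sum_{s,l}|J^{(i)}_{s,l}|\le 4\eta_1(n-1)$), each quadratic form satisfies $|h_s|\le 2\eta_1(n-1)\|\widehat{\bs\theta}-\bs\theta^{*}\|_\infty^2$, so $\|h\|_\infty\le 2\eta_1(n-1)\|\widehat{\bs\theta}-\bs\theta^{*}\|_\infty^2$. Applying Lemma \ref{yan2} with $\mathbf x=h$ gives
\[
\|V^{-1}h\|_\infty\le \frac{2c_1(2n-1)M^2}{m^3(n-1)^2}\|h\|_\infty+\frac{|h_{2n}|}{v_{2n,2n}}+\max_{i}\frac{|h_i|}{v_{i,i}},\qquad h_{2n}:=\sum_{i=1}^n h_i-\sum_{i=n+1}^{2n-1}h_i .
\]
With $v_{i,i}\ge (n-1)m$ and the consistency rate $\|\widehat{\bs\theta}-\bs\theta^{*}\|_\infty=O_p\!\big(\tfrac{M^2}{nm^3}(\sqrt{n\log n}+\kappa\sqrt{\log n})\big)$ from \eqref{eq4}, the first term is $O_p\!\big(\tfrac{M^6\eta_1(n+2\kappa n^{1/2}+\kappa^2)\log n}{n^2 m^9}\big)$, which is exactly $o_p(n^{-1/2})$ under \eqref{eq5}; the last term is smaller by a factor $O(m^2/M^2)\le O(1)$.

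The delicate point is the aggregated remainder $h_{2n}$: the naive bound $|h_{2n}|\le(2n-1)\|h\|_\infty$ is a factor $n$ too large and would force a condition strictly stronger than \eqref{eq5}. I would instead exploit cancellation in the signed aggregate. Observe that $h_{2n}$ is precisely the quadratic remainder of $\Phi(\bs\theta):=\sum_{i=1}^n F_i(\bs\theta)-\sum_{j=1}^{n-1}F_{n+j}(\bs\theta)$. Collecting the two double sums of expected degrees and using the identifiability constraint $\beta_n=0$, all off-diagonal terms cancel except the column-$n$ contributions, so that $\Phi(\bs\theta)=\big(\sum_{i=1}^n z_i^{+}-\sum_{j=1}^{n-1}z_j^{-}\big)-\sum_{r=1}^{n-1}\mu(\alpha_r)$. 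Hence $\nabla^2\Phi$ is diagonal with only the $n-1$ entries $-\mu''(\alpha_r)$ nonzero, which yields $|h_{2n}|\le \tfrac{\eta_1}{2}\sum_{r=1}^{n-1}(\widehat\alpha_r-\alpha_r^{*})^2\le \tfrac{\eta_1(n-1)}{2}\|\widehat{\bs\theta}-\bs\theta^{*}\|_\infty^2$ — the same order as $\|h\|_\infty$ rather than $n$ times larger. Combined with $v_{2n,2n}\ge(n-1)m$ this makes $|h_{2n}|/v_{2n,2n}=O_p\!\big(\tfrac{\eta_1 M^4(n+2\kappa n^{1/2}+\kappa^2)\log n}{n^2 m^7}\big)=o_p(n^{-1/2})$ under \eqref{eq5}. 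Establishing this cancellation is the main obstacle; once it is in place, all three terms in the Lemma \ref{yan2} bound are $o_p(n^{-1/2})$ and \eqref{5.2} follows.
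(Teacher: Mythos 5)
Your proposal is correct and follows essentially the same route as the paper: Taylor expansion of the moment equations to get $\widehat{\bs\theta}-\bs\theta^{*}=V^{-1}(\bs{\tilde g}-\E\bs g)+V^{-1}\textbf{h}$, the bound $\|\textbf{h}\|_\infty=O(n\eta_1\hat\rho_n^2)$, the $S$-plus-$W$ approximation of $V^{-1}$ (Lemmas \ref{yan3}--\ref{yan2}), and the rate computation under \eqref{eq5}. The cancellation you flag as the delicate point is exactly what the paper exploits, just phrased combinatorially rather than through your auxiliary function $\Phi$: in the signed aggregate $h_{2n}=\sum_{i=1}^{n}h_{i}-\sum_{i=n+1}^{2n-1}h_{i}$ every $h_{i,j}$ with $j\le n-1$ appears once in each sum, leaving only $\sum_{i\neq n}h_{i,n}\le\tfrac{n-1}{2}\eta_1\hat\rho_n^2$, which matches your bound.
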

\begin{proof}
Let $\hat{r}_{i,j}=\hat{\alpha}_{i}+\hat{\beta}_{j}-\alpha^{*}_{i}-\beta^{*}_{j}$ and assume

\[
  \hat{\rho}_{n} :=\max_{i\neq j}|\hat{r}_{i,j}|=O_{p}\left(\frac{c_{2}M^{2}}{nm^{3}}(\sqrt{n\log n}+\kappa\sqrt{\log n})\right).
  \]
For any $1\leq i\neq j\leq n,$ by the Taylor's expansion, we get
\[
    \begin{split}
\mu(\hat{\alpha}_{i}+\hat{\beta}_{j})-\mu(\alpha^{*}_{i}+\beta^{*}_{j})=\mu'(\alpha^{*}_{i}+\beta^{*}_{j})\hat{r}_{i,j}+h_{i,j},
    \end{split}
    \]
where $h_{i,j}=\cfrac{1}{2}\mu''(\hat{\theta}_{i,j})\hat{r}_{i,j}^{2}$ and $\hat{\theta}_{i,j}=t_{i,j}(\alpha^{*}_{i}+\beta^{*}_{j})+(1-t_{i,j})(\hat{\alpha}_{i}+\hat{\beta}_{j}), t_{i,j}\in (0,1).$ By (\ref{eqqq1}), we have
\[
   \bs{\tilde{g}}-\E(\bs{g})=V({\widehat{\bs\theta}}-\bs{\theta}^{*})+\textbf{h}.
 \]
Equivalently,
    \begin{equation}\label{eq8}
 {\widehat{\bs\theta}}-\bs{\theta}^{*}=V^{-1}( {\bs{\tilde{g}}}- \E(\bs{g}))+V^{-1}\textbf{h}.
    \end{equation}
where $\textbf{h}=(h_{1},\ldots,h_{2n-1})^{\top}$ and
\[
    \begin{split}
 &h_{i}=\sum_{k=1,k\neq i}^{n}h_{i,k}, i=1,\ldots,n,\\
 &h_{n+i}=\sum_{k=1,k\neq i}^{n}h_{k,i}, i=1,\ldots,n-1.
    \end{split}
    \]
By (\ref{eqq4}), we get
\[
    \begin{split}
 & |h_{i,j}|=\Big|\cfrac{1}{2}\mu''(\hat{\theta}_{i,j})\hat{r}_{i,j}^{2}\Big|
\leq \frac{1}{2}\eta_{1}\hat{r}^{2}_{i,j}\leq \frac{1}{2}\eta_{1}\hat{\rho}^{2}_{n},\\
 & |h_{i}|\leq\sum_{j\neq i}\Big|h_{i,j}\Big|\leq \frac{1}{2}(n-1)\eta_{1}\hat{\rho}^{2}_{n}.
    \end{split}
    \]
Note that $(S\textbf{h})_{i}=\cfrac{h_{i}}{v_{i,i}}+(-1)^{1{(i>n)}}\cfrac{h_{2n}}{v_{2n,2n}}$, $h_{2n}:=\sum_{i=1}^{n}h_{i}-\sum_{i=n+1}^{2n-1}h_{i}=\sum_{i=1,i\neq k}^{n}h_{i,n}\leq \cfrac{n-1}{2}\eta_{1}\hat{\rho}^{2}_{n}$ and $(V^{-1}\textbf{h})_{i}=(S\textbf{h})_{i}+(W\textbf{h})_{i}.$
By Proposition 1 of \cite{Yan:Leng:Zhu:2016}, we get
\[
\begin{split}
|(V^{-1}\textbf{h})_{i}|&\leq |(S\textbf{h})_{i}|+|(W\textbf{h})_{i}|\\
&\leq \cfrac{|h_{i}|}{v_{i,i}}+\cfrac{|h_{2n}|}{v_{2n,2n}}+\|W\|_{\max}\times \left[(2n-1)\max_{i}|h_{i}|\right]\\
&\leq O\left(\cfrac{\eta_{1}\hat{\rho}^{2}_{n}}{m}\right)+O\left(\frac{M^{2}}{m^{3}(n-1)^{2}}\times\frac{1}{2}(2n-1)(n-1)\eta_{1}\hat{\rho}_{n}^{2}\right)\\
&\leq O\left(\frac{m^{2}+M^{2}}{2m^{3}}\eta_{1}\hat{\rho}_{n}^{2}\right)\\
&=O\left(\frac{M^{4}(m^{2}+M^{2})}{2m^{9}n^{2}}\eta_{1}(n+2\kappa n^{1/2}+\kappa^{2})\log n\right)\\
&\leq O\left((n+2\kappa n^{1/2}+\kappa^{2})\frac{M^{6}\eta_{1}\log n}{m^{9}n^{2}}\right) .
  \end{split}
    \]
Again, by (\ref{eq5}), we obtain
\begin{equation}\label{eqA4}
|(V^{-1}\textbf{h})_{i}|\leq |(S\textbf{h})_{i}|+|(W\textbf{h})_{i}|=o(n^{-1/2}).
    \end{equation}
\end{proof}

\noindent{{\bf Proof of Theorem  \ref{theorem2}.}
In view of Lemmas \ref{lemma4} and \ref{Lemma9}, the left arguments are very similar those
 Theorem 2 in \cite{Yan:2021}. Thus, we
omit it.

\end{document}